\documentclass[draft]{amsart}

\usepackage{pgf,tikz}
\usetikzlibrary{arrows}
\usepackage{cite}
\usepackage{stmaryrd}

\usepackage{amssymb,graphicx,enumerate,color}
\usepackage{multirow}

\usepackage{graphicx}

\setlength{\textheight}{220mm} \setlength{\textwidth}{155mm}
\setlength{\oddsidemargin}{1.25mm}
\setlength{\evensidemargin}{1.25mm} \setlength{\topmargin}{-0mm}

\newcommand{\A}{\mathcal{A}}

\newcommand{\D}{{\mathcal{D}}}

\newcommand{\F}{{\mathcal{F}}}

\newcommand{\beqs}{\begin{equation*}}
\newcommand{\eeqs}{\end{equation*}}
\numberwithin{equation}{section}
 \theoremstyle{plain}
\newtheorem{theorem}{Theorem}[section]

\newtheorem{corollary}[theorem]{Corollary}

\theoremstyle{remark}

\theoremstyle{empty}

\hyphenation{auto-maton}

\begin{document}

\makeatletter
\def\imod#1{\allowbreak\mkern10mu({\operator@font mod}\,\,#1)}
\makeatother


\author{Ali Kemal Uncu}
   \address{Austrian Academy of Sciences, Johann Radon Institute for Computational and Applied Mathematics, Altenbergerstrasse 69, 4040 Linz, Austria}
   \email{akuncu@risc.jku.at}

\title[On a weighted spin of the Lebesgue Identity]{On a weighted spin of the Lebesgue Identity}



\begin{abstract} Alladi studied partition theoretic implications of a two variable generalization of the Lebesgue identity. In this short note, we focus on a slight variation of the basic hypergeometric sum that Alladi studied. We present two new partition identities involving weights.
\end{abstract}
   
\keywords{Lebesgue Identity, Generalized Lebesgue Identities, Heine Transformation, Weighted Partition Identities}

 \subjclass[2010]{05A15, 05A17, 05A19, 11P81, 33D15}

\thanks{Research of the author is supported by the Austrian Science Fund FWF, SFB50-09 and SFB50-11 Projects.}

\date{\today}
   
\maketitle

\section{Introduction}

One of the fundamental identities in the theory of partitions and $q$-series is the Lebesgue identity: \begin{equation}
\label{Lebesgue} \sum_{n\geq0} \frac{(-aq)_n}{(q)_n} q^{\frac{n(n+1)}{2}} = \frac{(-aq^2;q^2)_\infty}{(q;q^2)_\infty},
\end{equation} where $a$ and $q$ are variables and the $q$-Pochhammer symbol is defined as follows \[(a)_n := (a;q)_n := \prod_{i=0}^{n-1} (1-a q^i),\] for any $n\in \mathbb{Z}\cup\{\infty\}$. Some combinatorial implications of this result were studied by Alladi \cite{Alladi_Lebesgue}. In the same paper, he also did a partition theoretic study of a summation formula due to Ramanujan \cite[(1.3.13), p 13]{LostNotebook2} \begin{equation}\label{GeneralizedLebesgueIdentity}\sum_{n\geq 0} \frac{(-b/a)_n a^n q^{n(n+1)/2}}{(q)_n(bq)_n} = \frac{(-aq)_\infty}{(bq)_\infty}.\end{equation} Alladi called this identity and it's dilated forms \textit{Generalized Lebesgue identities}.

We would like to study a similar function that is not directly related to \eqref{GeneralizedLebesgueIdentity} or that satisfies a summation formula, but that still manifest beautiful relations. Let $a,\ z$ and $q$ be variables and define \begin{equation}\label{Ffunction}
\F(a,z,q) := \sum_{n\geq0} \frac{(za)_n}{(q)_n(zq)_n} z^n q^{\frac{n(n+1)}{2}}.
\end{equation} Looking at $\F(b,a,q)$ it is clear that this sum is ---so to speak--- a sibling of the Generalized Lebesgue identity \eqref{GeneralizedLebesgueIdentity}, and $\F(-aq,1,q)$ is a cousin of the original Lebesgue identity \eqref{Lebesgue} with an extra $q$-factorial, $1/(q)_n$, in the summand. This extra factor will be the source of the weights in the combinatorial/partition theoretic study of the identities related to the \eqref{Ffunction}. For other references related to weighted partition identities of this spirit one can refer to \cite{Alladi_Weighted, BerkovichUncu3, Uncu}, and in a wider perspective some other recent weighted partition identities can be found in \cite{AlladiPartialTheta, Dixit, BerkovichUncu4}.

Before any combinatorial study, we would like to note the following theorem.
\begin{theorem}\label{Main_theorem_analytic}For variables $a,\ z$ and $q$, we have
\begin{equation}\label{Main_Identity_analytic}
\sum_{n\geq0} \frac{(za)_n(z q^{n+1})_\infty}{(q)_n} z^n q^{\frac{n(n+1)}{2}} = \sum_{n\geq0} \frac{(-za)_n(-zq^{n+1})_\infty}{(q)_n}(-z)^n q^{\frac{n(n+1)}{2}}.
\end{equation}
\end{theorem}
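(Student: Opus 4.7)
The plan is to reduce the identity to a symmetry of the function $\F(a,z,q)$ and then derive that symmetry from Heine's second transformation.

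First I would observe that both sides can be recognized as (essentially) $\F$ evaluated at different arguments. Since $(zq^{n+1};q)_\infty = (zq;q)_\infty/(zq;q)_n$, factoring the constant $(zq;q)_\infty$ out of the sum on the left-hand side of \eqref{Main_Identity_analytic} gives $(zq;q)_\infty\,\F(a,z,q)$, and the analogous manipulation with $z\mapsto -z$ turns the right-hand side into $(-zq;q)_\infty\,\F(-a,-z,q)$. So the theorem is equivalent to the cleaner statement
\begin{equation*}
(zq;q)_\infty\,\F(a,z,q) \;=\; (-zq;q)_\infty\,\F(-a,-z,q).
\end{equation*}

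Next I would realize $\F(a,z,q)$ as a limit of a Heine-summable ${}_2\phi_1$. Using the standard limit $(c;q)_n/c^n \to (-1)^n q^{n(n-1)/2}$ as $c\to\infty$, one checks that
\begin{equation*}
\F(a,z,q) \;=\; \lim_{c\to\infty} {}_2\phi_1\!\left(za,\,c;\,zq;\,q,\,-zq/c\right).
\end{equation*}
This is the crucial bookkeeping step: the extra $1/(zq)_n$ in $\F$ is exactly what puts the sum into ${}_2\phi_1$ form, and the sign choice $-zq/c$ in the argument is what produces the $q^{n(n+1)/2}$ (rather than a sign-alternating analogue) in the limit.

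Then I would apply Heine's second transformation
\begin{equation*}
{}_2\phi_1(u,v;w;q,\zeta) \;=\; \frac{(w/v;q)_\infty\,(v\zeta;q)_\infty}{(w;q)_\infty\,(\zeta;q)_\infty}\,{}_2\phi_1(uv\zeta/w,\,v;\,v\zeta;\,q,\,w/v)
\end{equation*}
with $u=za$, $v=c$, $w=zq$, $\zeta=-zq/c$. The parameters on the right become $uv\zeta/w=-za$, $v\zeta=-zq$, $w/v=zq/c\to 0$, and the transformed series is ${}_2\phi_1(-za,c;-zq;q,zq/c)$, whose $c\to\infty$ limit is $\F(-a,-z,q)$ by the same recipe as before. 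The prefactor tends to $(-zq;q)_\infty/(zq;q)_\infty$, which yields the desired symmetry and hence \eqref{Main_Identity_analytic}.

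The only delicate point is checking that the $c\to\infty$ limit may be taken termwise on both sides of Heine's transformation (dominated convergence in $|q|<1$ is routine, but worth a line), and picking the correct one of the three Heine transformations: Heine I and III place $c$ in an infinite product that does not converge as $c\to\infty$, whereas Heine II is designed precisely so that the $c$-dependence in the prefactor disappears in the limit. Once the right transformation is identified, the rest is bookkeeping.
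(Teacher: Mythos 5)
Your proposal is correct and follows essentially the same route as the paper: write $\F$ as a limit of a ${}_2\phi_1$ via $(c)_n/c^n\to(-1)^nq^{n(n-1)/2}$, apply the second iterate of Heine's transformation, and pass to the limit to obtain $(zq)_\infty\,\F(a,z,q)=(-zq)_\infty\,\F(a,-z,q)$, which is the theorem after pulling the infinite products back inside. The only flaw is cosmetic: since $a$ enters $\F$ only through the product $za$, the limiting series you compute (correctly, with $uv\zeta/w=-za$ and $v\zeta=-zq$) is $\F(a,-z,q)$, not $\F(-a,-z,q)$ --- the label is off by a sign but the underlying series matches the right-hand side of \eqref{Main_Identity_analytic}.
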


Please note that the only difference between the left- and right-hand sides of \eqref{Main_Identity_analytic} is $z\mapsto-z$. In other words, the object is even in the variable $z$. In author's view, the observed symmetry makes this identity visually highly pleasing.

The following sections are arranged as follows. In Section~\ref{Section_Pf}, we give a proof of Theorem~\ref{Section_Pf} and note some Corollaries of this result. In Section~\ref{Section_Comb}, we study the partition theoretic interpretations of the results in Section~\ref{Section_Pf}.

\section{Proof of Theorem~\ref{Main_theorem_analytic}}\label{Section_Pf}

We require two main ingredients for the proof of \eqref{Main_Identity_analytic}. First, it is a known fact that \begin{align}
\label{limit}\lim_{\rho \rightarrow \infty} \frac{(\rho)_n}{\rho^n} &= (-1)^n q^{\frac{n(n-1)}{2}},
\intertext{and, second, Heine Transformation \cite[p. 241, III.2]{GasperRahman}}
\label{Heine2} \sum_{n\geq 0} \frac{(a)_n(b)_n}{(q)_n(c)_n} z^n &= \frac{(c/b)_\infty(bz)_\infty}{(c)_\infty(z)_\infty} \sum_{n\geq 0} \frac{(abz/c)_n(b)_n}{(q)_n(bz)_n} \left(\frac{c}{b}\right)^n 
\end{align} 

\begin{proof}[Proof of Theorem~\ref{Main_theorem_analytic}] The function $\F(a,z,q)$ can be written as the following due to \eqref{limit}:
\[\F(a,z,q) = \lim_{\rho \rightarrow \infty}\sum_{n\geq0} \frac{(za)_n(\rho)_n}{(q)_n(zq)_n} \left(-\frac{zq}{\rho}\right)^n.\] Then we can directly apply the Heine transformation \eqref{Heine2}, and after tending $\rho\rightarrow\infty$, one gets \begin{equation}
\label{raw_main_identity}\F(a,z,q) = \frac{(-zq)_\infty}{(zq)_\infty} \F(a,-z,q).
\end{equation}
Multiplying both sides of \eqref{raw_main_identity} with $(zq)_\infty$, carrying the infinite $q$-Pochhammers inside the sums, and doing elementary simplifications in the summand level finishes the proof.
\end{proof}

It is evident that some special cases of \eqref{Main_Identity_analytic} (such as $(a,z,q) = (q,1,q)$) can be summed by utilizing simple summation formulas (such as  \cite[II.2, p 354]{GasperRahman} and shown to be equal to $(q^2;q^2)_\infty$). This is not our motivation. We would like to look at special cases of \eqref{Main_Identity_analytic} to extract some combinatorial information. The $(a,z,q) = (q,1,q)$ and $ (-q,1,q)$ cases are presented in Corollary~\ref{Analytic_z_equlas_1_Corollary}.

\begin{corollary}\label{Analytic_z_equlas_1_Corollary} Let $q$ be a variable, we have
\begin{align}
\label{cor_1_eqn_1} \sum_{n\geq 0} (q^{n+1})_\infty q^{\frac{n(n+1)}{2}} &= \sum_{n\geq 0} (-q^{n+1})_\infty \frac{(-q)_n}{(q)_n} (-1)^n q^{\frac{n(n+1)}{2}},\\
\label{cor_1_eqn_2} \sum_{n\geq 0} (-q^{n+1})_\infty (-1)^n q^{\frac{n(n+1)}{2}} &= \sum_{n\geq 0} (q^{n+1})_\infty \frac{(-q)_n}{(q)_n} q^{\frac{n(n+1)}{2}}.
\end{align}
\end{corollary}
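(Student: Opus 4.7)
The plan is to derive both identities as direct specializations of Theorem~\ref{Main_theorem_analytic}; the only freedom available is in the two parameters $(a,z)$, and a suitable choice on each side cancels the factor $(\pm za)_n$ against the denominator $(q)_n$, leaving the cleaner sums that appear in the statement.

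For \eqref{cor_1_eqn_1} I would set $(a,z)=(q,1)$ in \eqref{Main_Identity_analytic}. On the left-hand side the Pochhammer $(za)_n=(q)_n$ cancels with $(q)_n$ in the denominator, and what remains is precisely
\[
\sum_{n\geq 0}(q^{n+1})_\infty\, q^{n(n+1)/2}.
\]
On the right-hand side this cancellation does not occur: $(-za)_n=(-q)_n$, leaving the ratio $(-q)_n/(q)_n$, while the factor $(-z)^n$ becomes $(-1)^n$. Collecting gives \eqref{cor_1_eqn_1} verbatim.

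For \eqref{cor_1_eqn_2} I would instead take $(a,z)=(-q,1)$. The roles now flip: on the right-hand side $(-za)_n=(q)_n$ cancels the denominator $(q)_n$, producing $\sum_{n\geq 0}(-q^{n+1})_\infty(-1)^n q^{n(n+1)/2}$, while on the left-hand side $(za)_n=(-q)_n$ survives and contributes $(-q)_n/(q)_n$. Reading the resulting identity with its sides swapped yields \eqref{cor_1_eqn_2}.

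There is no real obstacle beyond sign bookkeeping; the whole corollary is just two substitutions. The one mildly delicate point is that in passing from \eqref{cor_1_eqn_1} to \eqref{cor_1_eqn_2} the "clean" sum (the one without the $(-q)_n/(q)_n$ factor) migrates from one side of the equality to the other, which is accounted for by the fact that the specialization $a\mapsto -q$ flips precisely which of $(\pm za)_n$ equals $(q)_n$. No further identities or transformations are needed.
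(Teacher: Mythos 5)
Your proposal is correct and is exactly the paper's intended derivation: the text states that \eqref{cor_1_eqn_1} and \eqref{cor_1_eqn_2} are the $(a,z,q)=(q,1,q)$ and $(-q,1,q)$ specializations of \eqref{Main_Identity_analytic}, and your sign bookkeeping (including which side the cancellation $(\pm za)_n=(q)_n$ occurs on, and the swap of sides for the second identity) checks out.
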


Another interesting corollary can be seen by picking $a=z=1$ in \eqref{raw_main_identity} and using Jacobi Triple Product identity \cite[p. 239, II.2]{GasperRahman}, \begin{equation}
\label{JTP} \sum_{n=-\infty}^\infty z^n q^{n^2} = (-zq;q^2)_\infty (-q/z;q^2)_\infty (q^2;q^2)_\infty.
\end{equation}

\begin{corollary}\label{JTP_Corollary} We have
\[ \sum_{n\geq 1} (-1)^n q^{n^2} = \sum_{n\geq 1} \frac{(-1)^n q^{{\frac{n(n+1)}{2}}}}{(q)_n (1+q^n)}.\]
\end{corollary}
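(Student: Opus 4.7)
The plan is to substitute $a=z=1$ into \eqref{raw_main_identity}, which we may do since that identity is proved before the corollary. First, I would observe that $\F(1,1,q)$ collapses dramatically: the factor $(za)_n = (1)_n$ vanishes for every $n\geq 1$, so only the $n=0$ term survives and $\F(1,1,q) = 1$. For the right-hand side, $\F(1,-1,q) = \sum_{n\geq 0} \frac{(-1)_n}{(q)_n(-q)_n}(-1)^n q^{n(n+1)/2}$, and the key simplification there is $(-1)_n = 2(-q)_{n-1}$ for $n\geq 1$, so that $(-1)_n/(-q)_n = 2/(1+q^n)$. This turns $\F(1,-1,q)$ into $1 + 2\sum_{n\geq 1} \frac{(-1)^n q^{n(n+1)/2}}{(q)_n(1+q^n)}$.

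Plugging these two evaluations into \eqref{raw_main_identity} and clearing the infinite-product prefactor yields
\[\frac{(q)_\infty}{(-q)_\infty} = 1 + 2\sum_{n\geq 1} \frac{(-1)^n q^{n(n+1)/2}}{(q)_n(1+q^n)}.\]
So the task reduces to identifying the left-hand side with $\sum_{n=-\infty}^{\infty}(-1)^n q^{n^2}$. For this I would apply the Jacobi triple product \eqref{JTP} at $z=-1$, which gives $\sum_{n=-\infty}^{\infty}(-1)^n q^{n^2} = (q;q^2)_\infty^2(q^2;q^2)_\infty$. Using the standard splittings $(q;q)_\infty = (q;q^2)_\infty (q^2;q^2)_\infty$ and $(q;q)_\infty(-q;q)_\infty = (q^2;q^2)_\infty$, one rewrites $(q;q^2)_\infty^2 (q^2;q^2)_\infty = (q)_\infty^2/(q^2;q^2)_\infty = (q)_\infty/(-q)_\infty$, matching the prefactor above.

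Putting these together, one obtains
\[1 + 2\sum_{n\geq 1}(-1)^n q^{n^2} = 1 + 2\sum_{n\geq 1} \frac{(-1)^n q^{n(n+1)/2}}{(q)_n(1+q^n)},\]
and cancelling the $1$ and the factor $2$ on both sides gives the claimed identity.

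The only step that requires a bit of care is the simultaneous collapse of $(1)_n$ on the left and the reduction of $(-1)_n/(-q)_n$ on the right; once those are handled correctly, the rest is a clean application of \eqref{raw_main_identity} and a textbook manipulation of Jacobi triple product followed by standard infinite-product identities, so I do not anticipate any serious obstacle beyond keeping track of the constant term and the factor of $2$.
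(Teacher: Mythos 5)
Your proposal is correct and follows essentially the same route as the paper: set $a=z=1$ in \eqref{raw_main_identity}, note that $(1)_n$ kills all terms with $n\geq 1$ so the left side is $1$, rewrite $(q)_\infty/(-q)_\infty$ via the Jacobi triple product at $z=-1$, and simplify $(-1)_n/(-q)_n = 2/(1+q^n)$ before cancelling the constant term and the factor of $2$. All the individual manipulations check out, so there is nothing to add.
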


\begin{proof} It is clear that only the $n=0$ term of the sum on the left-hand side of \eqref{raw_main_identity} is non-zero when $a=z=1$, and the total sum on the left hand side is 1: \[1 = \frac{(-q)_\infty}{(q)_\infty} \sum_{n\geq 0}\frac{(-1)_n }{(q)_n(-q)_n} (-1)^n q^{\frac{n(n+1)}{2}}.\] We multiply both sides of this equation by $(q)_\infty / (-q)_\infty$ and observe that \[\frac{(q)_\infty}{(-q)_\infty} = \frac{(q;q^2)_\infty(q^2;q^2)_\infty}{(-q)_\infty} =\frac{(q;q^2)_\infty (q)_\infty (-q)_\infty}{(-q)_\infty} = (q;q^2)^2_\infty (q^2;q^2)_\infty.\] The right-hand side of the last line is the same as the right-hand side of \eqref{JTP} with $z=-1$. This yields \begin{equation}
\sum_{n=-\infty}^\infty (-1)^n q^{n^2} = \sum_{n\geq 0}\frac{(-1)_n }{(q)_n(-q)_n} (-1)^n q^{\frac{n(n+1)}{2}},
\end{equation}where the left-hand side is coming from \eqref{JTP} and the right-hand side is $\F(1,-1,q)$. Splitting the bilateral sum on the left-hand side and using simple cancellations on $(-1)_n / (-q)_n$, using the definition of the $q$-factorials, on the right-hand side, we get \begin{equation}
1+ 2 \sum_{n\geq 1}(-1)^n q^{n^2} = 1 + 2 \sum_{n\geq 1} \frac{(-1)^n q^{{\frac{n(n+1)}{2}}}}{(q)_n (1+q^n)}.
\end{equation} This shows claim.
\end{proof}

Another proof of this result appears in the author's joint paper with Berkovich as Lemma~4.1 \cite{BerkovichUncu4}. Combinatorial interpretation of this identity was done by Bessenrodt--Pak \cite{BessenrodtPak} and later by Alladi \cite{AlladiPartialTheta} studies the combinatorial implications of this identity.

\section{Partition Theoretic Interpretations of Corollary~\ref{Analytic_z_equlas_1_Corollary}}\label{Section_Comb}

We would like to interpret the identities \eqref{cor_1_eqn_1} and \eqref{cor_1_eqn_2} as weighted partition identities. To that end, we need to define what a partition is and some related statistics. A \textit{partition} (in \textit{frequency notation} \cite{Theory_of_Partitions}) is a list of the form \[(1^{f_1},2^{f_2},3^{f_3},\dots )\] where $f_i \in \mathbb{N}\cup \{0\}$ and all but finitely many $f_i$ are non-zero. When writing example partitions down, one tends to drop the zero frequency parts to keep the notation clean. 

If none of the \textit{frequencies} $f_i$ are greater than 1, we call these partitions \textit{distinct}. One can define the \textit{size} of a partition $\pi$ as \[|\pi| = \sum_{i\geq 1} i\cdot f_i, \] and the sum of all $f_i$ is the number of parts in a partition, we denote this by $\#(\pi)$. The partition with $f_i\equiv 0$ for all $i\in\mathbb{N}$ is the only partition of 0 with 0 parts. 

Let $t(\pi)$ be the number of non-zero frequencies of a partition $\pi$ starting from $f_1$. In other words, one can think of $t(\pi)$ as the length of the initial frequency chain. The length of the initial frequency chain seems to be an underutilized statistic in interpretations of $q$-series identities, the only other closely related statistic that the author knows of is used in \cite[Thm 3.1]{BerkovichUncu4}. Let $p_j(\pi)$ be the maximum index $i$ such that $f_i \geq j$ in $\pi$ and for all $k\geq i$ has the property $f_k < j$, if no positive value satisfies this we define $p_j(\pi)=0$. Let $r_j(\pi)$ be the number of different parts with frequencies $\geq j$.

To exemplify the statistics defined, let $\pi=(1^4,2^2,3^4,5^1,6^1)$ then $|\pi| = 31$, $\#(\pi) = 12$, $t(\pi) = 3$, $p_1(\pi) = 6$, $p_2(\pi)=3$, $p_3(\pi)=3$, $p_4(\pi)=3$, $p_5(\pi)=0$, $\dots$, $r_1(\pi) = 5$, $r_2(\pi)=3$, $r_3(\pi)=2$, $r_4(\pi)=2$, $r_5(\pi)=0$, $\dots$.

With the statistics defined above, one can interpret Corollary~\ref{Analytic_z_equlas_1_Corollary} as a weighted partition theorem, where $i=1$ corresponds to \eqref{cor_1_eqn_1} and $i=2$ refers to \eqref{cor_1_eqn_2},  as follows.

\begin{theorem}\label{Comb_Thm} Let $\D$ be the set of distinct partitions and let $\A$ be the set of partitions where all the partitions $\pi\in\A$ satisfy $p_2(\pi)\leq t(\pi)$. Then for $i=1$ and 2, we have
\begin{equation}
\label{weighted abstract} \sum_{\pi\in\D} w_i(\pi) q^{|\pi|}  =\sum_{\pi\in \A} \hat{w}_i(\pi) q^{|\pi|},
\end{equation}where
\begin{align}
\label{w_i} w_i(\pi) &= \left[1- f_1\, \left(\frac{1- (-1)^{t(\pi)}}{2}\right)  \right](-1)^{i \#(\pi)}, \\
\label{hat_w_i} \hat{w}_i(\pi) &=2^{r_2(\pi)}\left(\frac{(-1)^{t(\pi)}+(-1)^{p_2(\pi)}}{2} \right)(-1)^{(i-1) (r_1(\pi)+t(\pi)+p_2(\pi))}.  
\end{align}
\end{theorem}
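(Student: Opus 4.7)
The plan is to interpret both sides of \eqref{cor_1_eqn_1}--\eqref{cor_1_eqn_2} as generating functions for partitions, pin down for each $\pi$ the set of summation indices $n$ that actually produce it, and then evaluate the resulting alternating sums to match $w_i(\pi)$ and $\hat{w}_i(\pi)$.

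For the left-hand sides, the factor $(\pm q^{n+1})_\infty$ is the generating function for distinct partitions $\mu$ with parts $>n$ (signed by $(-1)^{\#(\mu)}$ when there is no minus sign inside, unsigned otherwise), while $q^{n(n+1)/2}$ adjoins the staircase $(1,2,\dots,n)$. Hence the $n$-th term encodes a distinct partition $\pi=(1,\dots,n)\cup\mu\in\D$, and a fixed $\pi\in\D$ arises exactly when $n\in\{0,1,\dots,t(\pi)\}$. Since all sign contributions that depend on $n$ have the form $(-1)^n$ (together, in one case, with an $n$-independent sign $(-1)^{\#(\pi)}$), the inner sum reduces to $\sum_{n=0}^{t(\pi)}(-1)^n=(1+(-1)^{t(\pi)})/2$, which on $\D$ coincides with $1-f_1(1-(-1)^{t(\pi)})/2$ because $f_1=1$ whenever $t(\pi)\ge 1$. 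This reproduces $w_i(\pi)$ as given in \eqref{w_i}.

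For the right-hand sides, I additionally use the standard expansion $(-q;q)_n/(q;q)_n=\sum_\nu 2^{r_1(\nu)}q^{|\nu|}$, the sum ranging over partitions $\nu$ with parts in $\{1,\dots,n\}$. The $n$-th summand then encodes $\pi=(1,\dots,n)\cup\nu\cup\mu$, weighted by $2^{r_1(\nu)}$, with $\mu$ a distinct partition whose parts exceed $n$. Such a decomposition exists precisely when every element of $\{1,\dots,n\}$ appears in $\pi$ and every part of $\pi$ larger than $n$ appears at most once, i.e.\ when $p_2(\pi)\le n\le t(\pi)$; the non-emptiness of this interval cuts out exactly $\A$. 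Crucially, for any $n$ in the admissible interval the parts of $\nu$ coincide with the parts of $\pi$ whose frequency is $\ge 2$, so $r_1(\nu)=r_2(\pi)$ is independent of $n$.

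Assembling the pieces, the RHS of \eqref{cor_1_eqn_1} contributes $2^{r_2(\pi)}q^{|\pi|}\sum_{n=p_2(\pi)}^{t(\pi)}(-1)^n$, and the RHS of \eqref{cor_1_eqn_2} contributes $2^{r_2(\pi)}q^{|\pi|}\sum_{n=p_2(\pi)}^{t(\pi)}(-1)^{r_1(\pi)-n}$ (the extra sign coming from $\#(\mu)=r_1(\pi)-n$). Both alternating sums evaluate to $\big((-1)^{t(\pi)}+(-1)^{p_2(\pi)}\big)/2$, up to an overall constant of $1$ for $i=1$ and of $(-1)^{r_1(\pi)+t(\pi)+p_2(\pi)}$ for $i=2$ (using $(-1)^{t(\pi)+p_2(\pi)}=1$ on the support of the sum to convert parities), thereby reproducing $\hat{w}_i(\pi)$. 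I expect the only nontrivial step to be this bookkeeping: correctly identifying the admissible interval $[p_2(\pi),t(\pi)]$ for the RHS and observing that $r_1(\nu)$ stabilizes to the global statistic $r_2(\pi)$ on it; everything else is a short parity computation.
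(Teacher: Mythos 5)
Your proposal is correct and follows essentially the same route as the paper: interpret each summand as a staircase $(1,\dots,n)$ glued to a distinct partition with parts $>n$ (and, on the right, an overpartition-type factor contributing $2^{r_2(\pi)}$), identify the admissible range of $n$ for a fixed $\pi$ as $[0,t(\pi)]$ resp.\ $[p_2(\pi),t(\pi)]$, and evaluate the resulting alternating sums. Your bookkeeping of the signs (in particular $\#(\mu)=r_1(\pi)-n$ and the use of $(-1)^{t(\pi)+p_2(\pi)}=1$ on the support) is, if anything, slightly more explicit than the paper's.
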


We would like to exemplify Theorem~\ref{Comb_Thm} with relevant partitions of 6 in Table~\ref{Table_1}.

\begin{table}[h]\caption{Partitions of 6 from $\D$ and $\A$ and the related weights $w_i$ and $\hat{w}_i$ to exemplify \ref{weighted abstract}.}\label{Table_1}\vspace{-0.8cm}
\[\begin{array}{cccc||ccccccc}
\pi \in\D 	& t(\pi)& w_1(\pi)	& w_2(\pi) 	& \pi\in\A	& t(\pi)	& p_2(\pi)	&r_2(\pi)	&	\hat{w}_1 &r_1(\pi)	&\hat{w}_2\\ \hline
(6^1)		& 	0	&  -1		&	1		& (6^1)	 	&	0		&	0		&	0		&	1		& 1	&	-1			\\
(1^1,5^1)	&	1	&	0		&	0		& (1^1,5^1)	&	1		&	0		&	0		&	0		& 2	&	0			\\
(2^1,4^1)	&	0	&	1		&	1		& (2^1,4^1)	&	0		&	0		&	0		&	1		& 2	&	1			\\
(1^1,2^1,3^1)&	3	&	0		&	0		& (1^2,4^1)	&	1		&	1		&	1		&	-2		& 2	&	-2			\\
			&		&			&			& (1^1,2^1,3^1)	&	3	&	0		&	0		&	0		& 3	&	0			\\
			&		&			&			& (1^3,3^1)	&	1		&	1		&	1		&	-2		& 2	&	-2			\\
			&		&			&			& (1^4,2^1)	&	2		&	1		&	1		&	0		& 2	&	0			\\
			&		&			&			& (1^2,2^2)	&	2		&	2		&	2		&	4		& 2	&	4			\\ 
			&		&			&			& (1^6)		&	1		&	1		&	1		&	-2		& 1	& 	2			\\ \hline
Total : 	&		&	0		&	2		&			&			&			&			&	0		&	&	2			
\end{array}\]
\end{table}

One key observation is that $w_2(\pi) = |w_1(\pi)| \geq 0$ for all distinct partitions. This proves that the series in \eqref{cor_1_eqn_2}, which is the analytic version of \eqref{weighted abstract} with $i=2$, have non-negative coefficients. We write this as a theorem using an equivalent form of the left-hand side series of \eqref{cor_1_eqn_2}.

\begin{theorem}\label{Thm_positivity}We have
\[(-q;q)_\infty \sum_{n\geq 0} \frac{(-1)^n q^{\frac{n(n+1)}{2}}}{(-q,q)_n} \succcurlyeq 0, \] where $\succcurlyeq 0$ is used to indicate that the series coefficients are all greater or equal than 0.
\end{theorem}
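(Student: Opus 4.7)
The plan is to recognize the series in question as the left-hand side of~\eqref{cor_1_eqn_2} and invoke Theorem~\ref{Comb_Thm} with $i=2$, whereupon positivity reduces to a trivial weight check. Using the elementary factorization $(-q^{n+1};q)_\infty=(-q;q)_\infty/(-q;q)_n$, I would first rewrite
\[(-q;q)_\infty\sum_{n\geq 0}\frac{(-1)^n q^{\frac{n(n+1)}{2}}}{(-q;q)_n}=\sum_{n\geq 0}(-q^{n+1};q)_\infty(-1)^n q^{\frac{n(n+1)}{2}},\]
so the series whose non-negativity we want is literally the left-hand side of~\eqref{cor_1_eqn_2}.

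Next, Theorem~\ref{Comb_Thm} in the case $i=2$ identifies this series with $\sum_{\pi\in\D} w_2(\pi)\,q^{|\pi|}$. From~\eqref{w_i} and the fact that $(-1)^{2\#(\pi)}=1$, the weight simplifies to
\[w_2(\pi)=1-f_1\left(\frac{1-(-1)^{t(\pi)}}{2}\right).\]
I would then verify that $w_2(\pi)\in\{0,1\}$ for every $\pi\in\D$ by a short case split: since $\pi$ is distinct, $f_1\in\{0,1\}$. If $f_1=0$ then $w_2(\pi)=1$. If $f_1=1$ then $w_2(\pi)=1$ when $t(\pi)$ is even and $w_2(\pi)=0$ when $t(\pi)$ is odd. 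In every case the weight is a non-negative integer, so the coefficient of $q^N$ in the right-hand side of~\eqref{weighted abstract} with $i=2$ is a sum of non-negative integers, which is the desired positivity.

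There is no serious obstacle along this route; the substantive analytic and combinatorial work has already been absorbed into Theorem~\ref{Main_theorem_analytic} and Theorem~\ref{Comb_Thm}. The only point that needs any care is confirming that $w_2$ truly collapses to a $\{0,1\}$-valued function on $\D$, which is exactly the observation highlighted in the discussion preceding the theorem statement.
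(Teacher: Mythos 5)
Your proposal is correct and follows essentially the same route as the paper: the paper derives Theorem~\ref{Thm_positivity} precisely by rewriting the series as the left-hand side of \eqref{cor_1_eqn_2} and invoking the observation that $w_2(\pi)=|w_1(\pi)|\geq 0$ on $\D$ via Theorem~\ref{Comb_Thm} with $i=2$. Your explicit case check that $w_2(\pi)\in\{0,1\}$ just spells out that observation in slightly more detail than the paper does.
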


The sum in Theorem~\ref{Thm_positivity} is a false theta function that Rogers studied \cite{Rogers}. Although this series has alternating signs, its product with the manifestly positive factor $(-q;q)_\infty$ has non-negative coefficients and the above key observation is a combinatorial explanation of this fact. 

Broadly speaking, connections of false/partial theta functions and their implications in the theory of partitions have been studied in various places. Interested readers can refer to \cite{BerkovichUncu3,AlladiPartialTheta}.

\begin{proof}[Proof of Theorem~\ref{Comb_Thm}] This theorem is a consequence of Corollary~\ref{Analytic_z_equlas_1_Corollary}, the $i=1$ and $2$ cases correspond to the combinatorial interpretations of \eqref{cor_1_eqn_1} and \eqref{cor_1_eqn_2}, respectively. 

First we focus on the left-hand side summands. For a fixed $n$ and $\varepsilon_1 = \pm 1$, $(\varepsilon_1 q^{n+1})_\infty$ is the generating function for the distinct partitions $\pi_d$ where every part is $\geq n+1$ counted with the weight $(-\varepsilon_1)^{\#(\pi_d)}$. We also interpret the $q$-factor, $\varepsilon_2^n q^{n(n+1)/2}$ as the partition $\pi_i = (1^1,2^1,\dots,n^1)$ counted with the weight $\varepsilon_2^n$, where $\varepsilon_2=\pm 1$. We can combine (add the frequencies of both partitions) $\pi_d$ and $\pi_i$ into a distinct partition $\pi$. 

In the sum, \[\sum_{n\geq 0} (\varepsilon_1 q^{n+1})_\infty\varepsilon_2^n q^{\frac{n(n+1)}{2}}, \] there are $t(\pi)+1$ possible pairs $(\pi_d,\pi_i)$ that can yield $\pi$, and one needs to count the weights of these accordingly. Note that if $t(\pi)\geq 1$ since $\pi$ is a distinct partition $f_1=1$. For the total weight of $\pi$, one needs to sum from $k=0$ to $t(\pi)$ of the alternating weights $(-\varepsilon_1)^{\#(\pi)-k}\varepsilon_2^k$: \[\sum_{k = 0}^{t(\pi)} (-\varepsilon_1)^{\#(\pi)-k}\varepsilon_2^k.\] By reducing the summations of alternating weights, one finds that $w_i(\pi)$ can be represented as in \eqref{w_i} for $i=1$ and 2, where $\varepsilon_1=\varepsilon_2=1$ and $\varepsilon_1=\varepsilon_2=-1$, respectively.

Similar to the left-hand side's interpretation, one needs to look at the pieces of the right-hand side summand. For a fixed $n$, once again the parts $(\varepsilon_1 q^{n+1})_\infty\varepsilon_2^n q^{n(n+1)/2}$ can be interpreted as the generating function for the partition pairs $(\pi_d,\pi_i)$ counted by some weights dependent of $\varepsilon_1$ and $\varepsilon_2$. The new factor $(-q)_n/(q)_n$ is the generating function for the number of overpartitions $\pi_o$, into parts $\leq n$. Overpartitions are the same as partitions counted with the weight $2^{r_1(\pi)}$. When we combine $\pi_d$, $\pi_i$ and $\pi_o$, we end up with a partition $\pi$ where some parts may repeat. 

Any repetition of the parts in $\pi$ comes from the overpartition $\pi_o$ and these repetitions can only appear for part $\leq t(\pi)$. Note that $\pi_i$, has a single copy of every part size up to $t(\pi)$ and $\pi_o$ may add more occurrences of these parts. This modifies the overpartition related weight a little and we need to take the first occurrence of a part for granted. On the other hand, if a part appears more than once the repetition should be counted with the weight $2^{r_2(\pi)}$. 

Here the summation bounds are slightly different than the previous case. One needs to sum all the possible $\varepsilon_1$ and $\varepsilon_2$ related weights from $k=p_2(\pi)$ to $t(\pi)$. Different than the previous one, $\#(\pi)$ is replaced by the number of non-repeating parts above the initial chain $t(\pi)$, which is $r_1(\pi)-t(\pi)$. Moreover, one needs to replace $k$ by $k-p_2(\pi)$ to eliminate the effect of the parity of $p_2(\pi)$ on the alternating sum. Hence, the sum to reduce here is \[\sum_{k = p_2(\pi)}^{t(\pi)} (-\varepsilon_1)^{r_1(\pi)-t(\pi)-p_2(\pi)-k}\varepsilon_2^k.\] These sums, once reduced, can be seen to yield $\hat{w}_i(\pi)$ for $i=1$ and $2$, where $\varepsilon_1=\varepsilon_2=-1$ and $\varepsilon_1=\varepsilon_2=1$, respectively.
\end{proof}

\section{Acknowledgement}

The author would like to thank the SFB50-09 and SFB50-11 Projects of the Austrian Science Fund FWF for supporting his research.

\end{document}